\begin{document}

\newcommand{\ci}[1]{_{ {}_{\scriptstyle #1}}}
\newcommand{\ti}[1]{_{\scriptstyle \text{\rm #1}}}

\newcommand{\norm}[1]{\ensuremath{\|#1\|}}
\newcommand{\abs}[1]{\ensuremath{\vert#1\vert}}
\newcommand{\nm}{\,\rule[-.6ex]{.13em}{2.3ex}\,}

\newcommand{\lnm}{\left\bracevert}
\newcommand{\rnm}{\right\bracevert}

\newcommand{\p}{\ensuremath{\partial}}
\newcommand{\pr}{\mathcal{P}}

\newcounter{vremennyj}

\newcommand\cond[1]{\setcounter{vremennyj}{\theenumi}\setcounter{enumi}{#1}\labelenumi\setcounter{enumi}{\thevremennyj}}

\newcommand{\pbar}{\ensuremath{\bar{\partial}}}
\newcommand{\db}{\overline\partial}
\newcommand{\D}{\mathbb{D}}
\newcommand{\T}{\mathbb{T}}
\newcommand{\C}{\mathbb{C}}
\newcommand{\N}{\mathbb{N}}
\newcommand{\bP}{\mathbb{P}}

\newcommand{\bS}{\mathbf{S}}
\newcommand{\bk}{\mathbf{k}}

\newcommand\cE{\mathcal{E}}
\newcommand\cP{\mathcal{P}}
\newcommand\cC{\mathcal{C}}
\newcommand\cH{\mathcal{H}}
\newcommand\cU{\mathcal{U}}
\newcommand\cQ{\mathcal{Q}}

\newcommand{\be}{\mathbf{e}}

\newcommand{\la}{\lambda}
\newcommand{\e}{\varepsilon}

\newcommand{\td}{\widetilde\Delta}

\newcommand{\tto}{\!\!\to\!}
\newcommand{\wt}{\widetilde}
\newcommand{\shto}{\raisebox{.3ex}{$\scriptscriptstyle\rightarrow$}\!}

\newcommand{\La}{\langle }
\newcommand{\Ra}{\rangle }
\newcommand{\ran}{\operatorname{ran}}
\newcommand{\tr}{\operatorname{tr}}
\newcommand{\codim}{\operatorname{codim}}
\newcommand\clos{\operatorname{clos}}
\newcommand{\spn}{\operatorname{span}}
\newcommand{\lin}{\operatorname{Lin}}
\newcommand{\rank}{\operatorname{rank}}
\newcommand{\re}{\operatorname{Re}}
\newcommand{\vf}{\varphi}
\newcommand{\f}{\varphi}


\newcommand{\entrylabel}[1]{\mbox{#1}\hfill}

\newenvironment{entry}
{\begin{list}{X}%
  {\renewcommand{\makelabel}{\entrylabel}%
      \setlength{\labelwidth}{55pt}%
      \setlength{\leftmargin}{\labelwidth}
      \addtolength{\leftmargin}{\labelsep}%
   }%
}%
{\end{list}}



\numberwithin{equation}{section}

\newtheorem{thm}{Theorem}[section]
\newtheorem{lm}[thm]{Lemma}
\newtheorem{cor}[thm]{Corollary}
\newtheorem{prop}[thm]{Proposition}

\theoremstyle{remark}
\newtheorem{rem}[thm]{Remark}
\newtheorem*{rem*}{Remark}

\title[Bergman Similarity]{Similarity of operators in the Bergman Space Setting}

\author[Douglas]{Ronald G. Douglas}
\address[Ronald G. Douglas]{Department of Mathematics, Texas A\&M University, College
Station, TX, 77843, USA} \email{rdouglas@math.tamu.edu}

\author[Kwon]{Hyun-Kyoung Kwon}
\address[Hyun-Kyoung Kwon]{Department of Mathematical Sciences, Seoul National University, Seoul, 151-747, Republic of Korea}
\email{hyunkwon@snu.ac.kr}

\author[Treil]{Sergei Treil}
\address[Sergei Treil]{Department of Mathematics, Brown
University, Providence, RI, 02912, USA}
\email{treil@math.brown.edu}

\thanks{The work of R.~G.~Douglas was partially supported by a grant from the National Science Foundation.
The work of H.~Kwon was supported by the Basic Science Research
Program through the National Research Foundation of Korea (NRF)
funded by the Ministry of Education, Science, and Technology
(2011-0026989) and by the T.~J.~ Park Postdoctoral Fellowship. The
work of S.~Treil was supported by the National Science Foundation
under Grant DMS-0800876.}

\keywords{Cowen-Douglas class, $n$-hypercontraction, similarity,
weighted Bergman space, eigenvector bundle, backward shift,
reproducing kernel}

\subjclass[2000]{Primary 47A99, Secondary 47B32, 30D55, 53C55}

\begin{abstract}
We give a necessary and sufficient condition for an $n$-
hypercontraction to be similar to the backward shift operator in a
weighted Bergman space. This characterization serves as a
generalization of the description given in the Hardy space
setting, where the geometry of the eigenvector bundles of the
operators is used.
\end{abstract}

\maketitle \setcounter{tocdepth}{1} \tableofcontents
\section*{Notation}
\begin{entry}
\item[$:=$] equal by definition;\medskip \item[$\C$] the complex
plane;\medskip \item[$\D$] the unit disk,
$\D:=\{z\in\C:\abs{z}<1\}$;\medskip \item[$\T$] the unit circle,
$\T:=\p\D=\{z\in\C:\abs{z}=1\}$;\medskip \item[$\frac{\p}{\p z},
\frac{\p}{\p \overline z}$] $\p$ and $\db$ derivatives:
$\frac{\p}{\p z}  := (\frac{\p}{\p x} - i \frac{\p}{\p y})/2$,
$\frac{\p}{\p \overline z}
 := (\frac{\p}{\p x} + i \frac{\p}{\p y})/2$; \medskip
\item[$\Delta$]normalized Laplacian, $\Delta := \db \p = \p \db =
\frac14\left(\frac{\p^2}{\p x^2} + \frac{\p^2}{\p y^2}
\right)$;\medskip \item[$\mathfrak{S}_2$]Hilbert-Schmidt  class of
operators;\medskip \item[$\norm{\cdot}, \nm\cdot \nm$]  norm:
since we are dealing with matrix- and operator-valued functions,
we will use the symbol $\|\,.\,\|$ (usually with a subscript) for
the norm in a function space, while $\nm\,.\,\nm$ is used for the
norm in the underlying vector (operator) space.
 Thus, for a vector-valued function $f$ the symbol $\|f\|_2$ denotes its $L^2$-norm, but the symbol $\nm f\nm$ stands
for the scalar-valued function whose value at a point $z$ is the
norm of the vector $f(z)$;  \medskip

\item[$H^\infty$] the space of all functions bounded and analytic
in $\D$; \medskip

\item[$L^\infty_{\!E_*\shto E}$] class of bounded functions on the
unit circle $\T$ whose values are bounded operators from a Hilbert
space $E_*$ to another one $E$ (the spaces $E$ and $E_*$ are not
supposed to be related in any way); \medskip

\item[$H^\infty_{\!E_*\shto E}$] operator Hardy class of bounded
analytic functions whose values are bounded  operators from $E_*$
to $E$:
$$
\|F\|_\infty := \sup_{z\in \D} \nm F(z)\nm=\underset{\xi\in
\T}{\operatorname{esssup}}\nm F(\xi)\nm; \medskip
$$

\item[$T_\Phi$] Toeplitz operator with symbol $\Phi$.

\medskip

\end{entry}

All Hilbert spaces are assumed to be separable. We also assume
that in a Hilbert space, an orthonormal basis is fixed so that any
operator $A:E\to E_*$ can be identified with its matrix. Thus,
besides the usual involution $A\mapsto A^*$ ($A^*$ is the adjoint
of $A$), we have two more: $A\mapsto A^T$ (transpose of the
matrix) and $A\mapsto \overline A$ (complex conjugation of the
matrix), so $A^* =(\overline A)^T =\overline{A^T}$. Although
everything in the paper can be presented in an invariant,
``coordinate-free'' form, the use of the transposition and complex
conjugation makes the notation simpler and more transparent.

\setcounter{section}{-1}

\section{Introduction}

We consider the question of when operators with a complete
analytic family of eigenvectors are similar. Recall that operators
$T_1$ and $T_2$ are said to be \emph{similar} if there exists a
bounded, invertible operator $A$ satisfying the intertwining
relation $AT_1 = T_2A$.

The problem of determining when two such operators are unitarily
equivalent goes back to the 1970's when the Cowen-Douglas class
was introduced in \cite{CowenDouglas}. It is proven there that
unitary equivalence has to do with the curvatures of the
eigenvector bundles of the operators and the partial derivatives
of them up to a certain order matching up. Unlike the unitary
equivalence case, however, the similarity problem posed a more
complicated situation (only some necessary conditions are listed
in \cite{CowenDouglas}) and no such criterion was obtained.

By adding the assumption that the operators in consideration be
contractive ($\|T \| \leq 1$), the authors in \cite{KwonTreil}
dealt with a special case of the problem; they gave a description
of operators with a complete analytic family of eigenvectors that
are similar to $S^*$, the backward shift operator on the Hardy
space $H^2$ (both scalar- and vector-valued) of the unit disk
$\D$. The backward shift $S^*$ is defined to be the adjoint of the
forward shift $S$,
$$Sf(z)=zf(z),$$ for $f \in H^2$, and similarity is shown to be equivalent
to the existence of a bounded (subharmonic) solution $\vf$ defined
on $\D$ to the Poisson equation
$$\Delta \vf=g,$$ where $g$ is a function related to the curvatures of the
eigenvector bundles of the operators.

One can ask whether the above characterization also holds for the
backward shift operators $B^*_\alpha$ defined on the weighted
Bergman spaces $A^2_{\alpha}$ (again, both scalar- and
vector-valued) of $\D$. If we let $P_\alpha$ denote the Bergman
projection and let $T_\Phi$ be the Toeplitz operator with symbol
$\Phi$ given by
$$T_\Phi f=P_{\alpha}(\Phi f),$$ then it
is easily seen that our backward shifts can be represented for $f
\in A^2_{\alpha}$ as
$$B^*_\alpha f(z)=P_{\alpha}(\bar{z}f(z))=T_{\bar{z}}f(z),$$
just like in the Hardy space case where the Bergman projections
are replaced by the Szeg\"{o} projection. We show in this paper
that the function-theoretic proof provided in \cite{KwonTreil} for
$S^*$ on $H^2$ can be applied to $B^*_\alpha$ on $A^2_\alpha$,
giving a generalization of the results there. Finally we mention
the recent paper \cite{DKKS}, where the authors use a Hilbert
module approach to prove that the similarity to the backward shift
operator on certain reproducing kernel Hilbert spaces can be
reduced to the similarity to $S^*$ on $H^2$.

\section{Preliminaries}
Let $n$ be a positive integer. Following the notation of
\cite{Agler2}, we denote by $\mathcal{M}_n$ the Hilbert space of
analytic functions on the unit disk $\D$ satisfying
$$
\|f\|^2_n:=\sum_{i=0}^{\infty} |\hat{f}(i)|^2 \frac{1}{ {n+i-1
\choose i}} < \infty,
$$
for $\mathcal{M}_n \ni f= \sum_{i=0}^{\infty} \hat{f}(i)z^i$. Note
that $\mathcal{M}_n$ corresponds to the Hardy space $H^2$ for
$n=1$, and for each positive integer $n \geq 2$, to the weighted
Bergman space $A^2_{n-2}$ defined by
$$
A^2_{n-2}=\{f \in \text{Hol}(\D): (n-1) \int_{\D} |f(z)|^2
(1-|z|^2)^{n-2}dA(z) < \infty\},
$$
for $dA$ the normalized area measure on $\D$. We can define the
vector-valued spaces $\mathcal{M}_{n,E}$ taking values in a
separable Hilbert space $E$ in a similar way.

On the space $\mathcal{M}_{n,E}$ are the forward shift operator
$S_{n,E}$, $S_{n,E}f(z)=zf(z)$ and the backward shift operator
$S^*_{n,E}$, its adjoint. Since $\mathcal{M}_n$ is a reproducing
kernel Hilbert space with reproducing kernel
$k_{\lambda}^n:=(1-\bar{\lambda}z)^{-n}, \la \in \mathbb{D}$, the
eigenvectors of $S^*_{n,E}$ corresponding to the eigenvalue
$\lambda$ is $k_{\bar{\lambda}}^ne$ for $e \in E$.

We now come to the definition of an $n$-hypercontraction
introduced in \cite{Agler} and \cite{Agler2}. Let $H$ be a Hilbert
space. An operator $T \in \mathcal{L}(H)$ is called an
\emph{n-hypercontraction} if
$$
\sum^k_{i=0} (-1)^i {{k}\choose{i}}T^{*i}T^i \geq 0,
$$
for all $1 \leq k \leq n$. Note that the 1-hypercontraction case
corresponds to the definition of the usual contraction.

Lastly, we recall the definition of a Carleson measure. Let
$$
Q(I):=\{z \in \mathbb{T}: \frac{z}{|z|} \in I, 1-|z| \leq |I|\},
$$
for $I \subseteq \mathbb{T}$, an arc of length $|I|$. A complex
measure $\mu$ in the closed unit disk is called a \emph{Carleson
measure} if for some constant $C$,
$$
|\mu|Q(I) \leq C |I|,
$$
where $|\mu|$ denotes the variation of $\mu$ \cite{Nik-book-v1}.

\section{Main results}
Let $n$ be a positive integer and $H$ a Hilbert space. We assume
the following for the operator $T \in \mathcal{L}(H)$ that we
consider:
\begin{enumerate}

\item $T$ is an $n-$hypercontraction;

\item $\spn\{\ker(T-\lambda): \lambda \in \D\}=H$; and

\item $\ker(T-\lambda)$ depend analytically on the spectral
parameter $\la \in \D$.\
\end{enumerate}

Assumption \cond3 says that for each $\lambda \in \mathbb{D}$, a
neighborhood $U_\lambda$ of $\lambda$ and an operator-valued
analytic function $F_\lambda$ defined on $U_\lambda$ that is
left-invertible in $L^\infty$ satisfying
$$\mbox{ran~}F_\lambda(w)=\ker(T-w),$$
for all $w \in U_\lambda$ exist. Therefore, the disjoint union
$\coprod_{\la\in\D} \ker(T-\la) =\{(\la, v_\la) : \la\in \D, v_\la
\in \ker(T-\la)\}$ is a hermitian, holomorphic vector bundle over
$\D$ with the metric inherited from $H$ and the natural projection
$\pi$, $\pi (\la, v_\la) =\la$. Note that assumption \cond3 then
implies that $\dim \ker (T-\lambda)$ is constant for all $\lambda
\in \D$. According to \cite{CowenDouglas}, the operators that
belong to the Cowen-Douglas class $B_m(\mathbb{D})$, or more
generally those with a certain Fredholm condition, for instance,
satisfy assumption \cond3.

We next mention that a \emph{bundle map} is a holomorphic map
between two holomorphic vector bundles over $\mathbb{D}$ that
linearly maps each fiber $\pi^{-1}(\lambda)$ of one bundle to the
corresponding fiber of the other bundle.

Now we state the main results of the paper:

\begin{thm} \label{t0.1}  Let $T \in \mathcal{L}(H)$ satisfy the above 3
assumptions with $\dim \ker(T-\la)=m < \infty$ for every $\la \in
\D$. Denote by $\Pi:\mathbb{D} \rightarrow \mathcal{L}(H)$ the
projection-valued function that assigns to each $\lambda \in
\mathbb{D}$, the orthogonal projection onto $\ker(T-\lambda)$. The
following statements are equivalent:

\begin{enumerate}
\item T is similar to the backward shift operator
$S^*_{n,\mathbb{C}^m}$ on $\mathcal{M}_{n,\mathbb{C}^m}$ via an
invertible operator $A:\mathcal{M}_{n,\mathbb{C}^m} \rightarrow
H$;

\item There exists a holomorphic bundle map bijection $\Psi$ from
the eigenvector bundle of $S^*_{n,\mathbb{C}^m}$ to that of $T$
such that for some constant $c>0$,
$$
\frac{1}{c} \| v_{\lambda}  \|_{\mathcal{M}_{n,\mathbb{C}^m}} \leq
\| \Psi ( v_{\lambda} )\| _{H} \leq c \| v_{\lambda} \|
_{\mathcal{M}_{n,\mathbb{C}^m}},
$$
for all $v_{\lambda} \in \ker(S^*_{n,\mathbb{C}^m}-\lambda)$ and
for all $\la\in \D$;

\item There exists a bounded solution $\vf$ defined on
$\mathbb{D}$ to the Poisson equation
$$
\Delta \vf (z) = \left \bracevert \frac{\partial\Pi(z)}{\partial
z}\right \bracevert ^2_{\mathfrak{S}_2}-\frac{mn}{(1-|z|^2)^2}.
$$
\end{enumerate}

\end{thm}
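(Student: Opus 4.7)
\emph{Proof plan.} The direction $(1)\Rightarrow(2)$ is routine: a similarity $A$ intertwining $S^*_{n,\C^m}$ and $T$ maps $\ker(S^*_{n,\C^m}-\la)$ onto $\ker(T-\la)$ bijectively for each $\la\in\D$, so the restriction of $A$ to the eigenvector bundle of $S^*_{n,\C^m}$ is the required $\Psi$, with the two-sided norm estimate inherited from $\norm{A}$ and $\norm{A^{-1}}$. For $(2)\Rightarrow(1)$ I would use that the eigenvectors $k^n_{\bar\la}e$ ($\la\in\D$, $e\in\C^m$) are total in $\mathcal{M}_{n,\C^m}$ and, by assumption $(2)$ applied to $T$, that their images $\Psi(k^n_{\bar\la}e)$ are total in $H$. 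Define $A$ on $\lin\{k^n_{\bar\la}e\}$ by $Ak^n_{\bar\la}e:=\Psi(k^n_{\bar\la}e)$; the norm equivalence applied to finite linear combinations shows that $A$ is bounded with bounded inverse on this dense subspace, so it extends to an invertible operator $\mathcal{M}_{n,\C^m}\to H$. The intertwining $AS^*_{n,\C^m}=TA$ holds on each eigenvector (both sides multiply by $\la$) and therefore everywhere by density.

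The heart of the theorem is $(2)\Leftrightarrow(3)$, a computation with two Hermitian holomorphic bundles over $\D$. The eigenvector bundle of $S^*_{n,\C^m}$ carries the global holomorphic frame $\{k^n_{\bar z}e_i\}_{i=1}^m$ with Gram matrix $(1-|z|^2)^{-n}I_m$, whence
\begin{equation*}
\Delta\log\det h_{\mathcal{M}_n}(z)=\frac{mn}{(1-|z|^2)^2}.
\end{equation*}
For the eigenvector bundle of $T$, the standard Cowen--Douglas-type computation of the trace of curvature of a holomorphically varying subspace of $H$, performed in any local holomorphic frame, gives
\begin{equation*}
\Delta\log\det h_T(z)=\nm\p\Pi(z)/\p z\nm^2_{\mathfrak{S}_2}.
\end{equation*}
Given $\Psi$ as in $(2)$, I would push forward by $\Psi$ the global frame to obtain $\{\Psi(k^n_{\bar z}e_i)\}$, a global holomorphic frame of the eigenvector bundle of $T$; $(2)$ is then equivalent to the matrix inequality $G(z)\asymp h_{\mathcal{M}_n}(z)$ uniformly in $z$, where $G(z)$ is the Gram matrix of this frame, and so $\vf(z):=\log\bigl(\det G(z)/\det h_{\mathcal{M}_n}(z)\bigr)$ is bounded; subtracting the two displayed identities shows that $\vf$ solves the Poisson equation in $(3)$. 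In the converse direction, a bounded $\vf$ is used to twist a natural candidate bundle map---produced from Agler's model, which realizes the $n$-hypercontraction $T$ as a piece of the backward shift on $\mathcal{M}_{n,F}$ for some auxiliary Hilbert space $F$---into one satisfying the two-sided norm bound of $(2)$.

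The main obstacle, as in the Hardy-space prototype of \cite{KwonTreil}, is the passage from the scalar determinant-level information in $(3)$ back to the matrix-level norm equivalence in $(2)$ when $m>1$: a bounded $\vf$ only controls $\det G/\det h_{\mathcal{M}_n}$, whereas $(2)$ requires control of all the singular values of the bundle-map matrix. To overcome this I would (i) reduce bounded solvability of the Poisson equation to a Carleson-measure condition on its right-hand side modulo a pluriharmonic correction, (ii) solve an auxiliary $\db$-problem with $L^\infty$ estimates of Uchiyama--Wolff type to realize the correction as $\log|\det M|^2$ for an analytic matrix-valued function $M$, and (iii) invoke the $n$-hypercontraction assumption through Agler's model to promote $M$ into the global bundle isomorphism $\Psi$ with the required two-sided bound.
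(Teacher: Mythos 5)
Your $(1)\Leftrightarrow(2)$ matches the paper, which treats it as obvious. Your route for $(2)\Rightarrow(3)$ is correct but genuinely different from the paper's: the identity $\Delta\log\det(F^*F)=\nm\p\Pi/\p z\nm^2_{\mathfrak{S}_2}$ for a holomorphic frame $F$, combined with $\Delta\log\det h_{\mathcal{M}_n}(z)=mn(1-|z|^2)^{-2}$ and the observation that the two-sided bound in $(2)$ pinches $\det G/\det h_{\mathcal{M}_n}$ between positive constants, hands you the bounded solution $\vf=\log(\det G/\det h_{\mathcal{M}_n})$ in one stroke. The paper instead invokes Agler's model to split $\Pi=\Pi_1\otimes\Pi_2$ (Lemma \ref{l-curv-T}), bounds $\nm\p\Pi_2/\p z\nm$ by $\nm F'\nm$ for the bounded frame $F$ coming from $\Psi$, and builds $\vf$ via Uchiyama's lemma and a Green-potential comparison; your computation is shorter, while the paper's route also records the intermediate Carleson-measure and pointwise statements $(4)$ and $(5)$.

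The genuine gap is in $(3)\Rightarrow(1)$. You correctly name the obstacle --- $(3)$ is determinant-level information while $(2)$ requires two-sided control of all singular values --- but your steps (i)--(iii) do not overcome it: solving a $\db$-problem to realize a harmonic correction as $\log|\det M|^2$ for an analytic matrix function $M$ again only controls $\det M$, and a matrix whose determinant is bounded away from $0$ and $\infty$ can still have arbitrarily small singular values, so no mechanism is supplied for ``promoting'' $M$ to a bundle isomorphism with $\nm M^{-1}\nm$ uniformly bounded. The paper's engine for this direction is Theorem \ref{t_Tr-W} (Treil--Wick): since by Lemma \ref{l-curv-T} the right-hand side of the Poisson equation equals $\nm\p\Pi_2/\p z\nm^2_{\mathfrak{S}_2}\ge\nm\p\Pi_2/\p z\nm^2$, the bounded subharmonic $\vf$ yields a \emph{bounded analytic (non-orthogonal) projection} $\cP(z)$ onto the fibers $\mathcal{N}(z)$ of Agler's model $T\cong S^*_{n,E}|\mathcal{N}$; one then takes the inner-outer factorization $\cP=\cP\ti i\cP\ti o$, proves $\cP\ti o\cP\ti i\equiv I$, and shows that the Toeplitz operator $T_{\cQ_i}$ with $\cQ_i(z)=\cP\ti i(\bar z)$ intertwines the backward shifts, is left-invertible, and has range exactly $\mathcal{N}$. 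Without this theorem (or an equivalent device converting a scalar subharmonic majorant of $\nm\p\Pi_2/\p z\nm^2$ into a uniformly bounded analytic complement of the bundle), your sketch does not close.
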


\begin{cor}
A contraction $T$ that satisfies assumptions \cond2, \cond3, and
$$
\sum^n_{i=0} (-1)^i {{n}\choose{i}}T^{*i}T^i \geq 0,
$$
enjoys the similarity characterization given in Theorem 2.1.
\end{cor}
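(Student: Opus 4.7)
The plan is to show that the hypotheses of the corollary force $T$ to be an $n$-hypercontraction, after which Theorem~\ref{t0.1} applies directly. Writing $D_k(T):=\sum_{i=0}^{k}(-1)^i\binom{k}{i}T^{*i}T^i$, the contraction assumption is exactly $D_1(T)\ge 0$ and the corollary hypothesis gives $D_n(T)\ge 0$; what is missing is the intermediate range $2\le k\le n-1$.

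My first step is to observe that $\|T\|\le 1$ together with assumption \cond2 forces $T^N\to 0$ in the strong operator topology. Indeed, on any eigenvector $v\in\ker(T-\la)$ with $\la\in\D$ one has $T^N v=\la^N v\to 0$, and the same holds on any finite linear combination of such vectors; combined with the uniform bound $\|T^N\|\le 1$ and the spanning assumption \cond2, this extends to all of $H$ by a standard $\e/2$ approximation argument.

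My second step is the Pascal-type identity
\[
D_k(T)=D_{k+1}(T)+T^* D_k(T) T,
\]
which follows at once from $\binom{k+1}{i}=\binom{k}{i}+\binom{k}{i-1}$. Iterating this relation $N$ times yields
\[
D_k(T)=\sum_{j=0}^{N-1} T^{*j} D_{k+1}(T) T^j + T^{*N} D_k(T) T^N,
\]
and since $\langle T^{*N}D_k(T)T^N x,x\rangle=\langle D_k(T) T^N x, T^N x\rangle\to 0$ for every $x\in H$ by the first step, the partial sums on the right converge weakly to $D_k(T)$. I then run a downward induction on $k$ starting from the given $D_n(T)\ge 0$: if $D_{k+1}(T)\ge 0$, every summand $T^{*j} D_{k+1}(T) T^j$ is positive, so the weak-operator limit $D_k(T)$ is positive as well. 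This gives $D_k(T)\ge 0$ for every $1\le k\le n$, and Theorem~\ref{t0.1} then applies to $T$.

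The only real subtlety in the whole argument is discarding the tail $T^{*N} D_k(T) T^N$, which is precisely why the $C_{\cdot 0}$ property is essential; it is fortunate that this property is a free consequence of $\|T\|\le 1$ and assumption \cond2 and requires no further hypothesis on $T$.
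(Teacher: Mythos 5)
Your argument is correct, and its overall strategy coincides with the paper's: both proofs first establish that $\|T^k h\|\to 0$ for all $h\in H$ (via the eigenvectors, assumption \cond2, and the uniform bound $\|T^k\|\le 1$), and then invoke the fact that an operator with $\sum_{i=0}^n(-1)^i\binom{n}{i}T^{*i}T^i\ge 0$ and $T^k\to 0$ strongly is automatically an $n$-hypercontraction. The difference is that the paper simply cites this last fact from Agler's work, whereas you prove it from scratch using the Pascal identity $D_k(T)=D_{k+1}(T)+T^*D_k(T)T$, the telescoping expansion
$$
D_k(T)=\sum_{j=0}^{N-1}T^{*j}D_{k+1}(T)T^j+T^{*N}D_k(T)T^N,
$$
and a downward induction on $k$, discarding the tail by strong stability. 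All the steps check out (the identity, the quadratic-form computation $\langle T^{*N}D_k(T)T^Nx,x\rangle=\langle D_k(T)T^Nx,T^Nx\rangle\to 0$, and the positivity of the weak limit), so your write-up is a self-contained version of the paper's two-line proof; what it buys is independence from the external reference, at the cost of a page of computation that the paper deliberately outsources.
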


\begin{cor}
A subnormal contraction that satisfies assumptions \cond2 and
\cond3 enjoys the similarity characterization given in Theorem
2.1.
\end{cor}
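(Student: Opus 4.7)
The plan is to reduce Corollary 2.3 directly to Theorem 2.1 by showing that a subnormal contraction $T$ is automatically an $n$-hypercontraction for every $n\ge 1$, regardless of which space $\mathcal{M}_{n,\mathbb{C}^m}$ is under consideration.

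Let $N\in\mathcal{L}(K)$ denote the minimal normal extension of $T$ on a Hilbert space $K\supset H$, so $N|_H=T$ and $H$ is invariant under $N$ (hence under every $N^i$, $i\ge 0$). Since $T$ is a contraction, $\|N\|=\|T\|\le 1$, and therefore $I-N^*N\ge 0$. Using the invariance of $H$ under $N$ (but not necessarily under $N^*$), one has $T^i=N^i|_H$ and consequently
$$
T^{*i}T^i \;=\; P_H N^{*i}|_H\cdot N^i|_H \;=\; P_H N^{*i}N^i|_H.
$$
The crucial point is now that, because $N$ is normal, the operators $N^*$ and $N$ commute, so the binomial theorem collapses the hypercontractivity sum:
$$
\sum_{i=0}^{k} (-1)^i \binom{k}{i}\, T^{*i}T^i \;=\; P_H\!\left(\sum_{i=0}^{k}(-1)^i\binom{k}{i} N^{*i}N^i\right)\!\bigg|_H \;=\; P_H(I-N^*N)^k\big|_H.
$$
Since $I-N^*N\ge 0$, also $(I-N^*N)^k\ge 0$ for every $k\ge 1$, and compression by $P_H$ preserves positivity. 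Hence $T$ satisfies the hypercontractivity inequality for every $k\ge 1$; in particular $T$ is an $n$-hypercontraction for the value of $n$ associated with $\mathcal{M}_{n,\mathbb{C}^m}$.

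Together with the standing assumptions \cond2 and \cond3, this places $T$ within the hypotheses of Theorem 2.1, and the three equivalent similarity conditions given there apply. There is no real obstacle beyond the single algebraic observation that normality of the extension converts the hypercontractivity polynomial into the manifestly nonnegative power $(I-N^*N)^k$; the rest is the standard Berger--Halmos-style compression argument.
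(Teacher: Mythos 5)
Your proof is correct and follows the same route as the paper: both arguments reduce the corollary to Theorem 2.1 by observing that a subnormal contraction is an $n$-hypercontraction for every $n$. The only difference is that the paper simply cites this fact (attributed to Embry/Agler), whereas you supply its proof via the minimal normal extension $N$, the identity $T^{*i}T^i = P_H N^{*i}N^i|_H$, and the binomial collapse $\sum_{i=0}^k(-1)^i\binom{k}{i}(N^*N)^i=(I-N^*N)^k\ge 0$; that computation is the standard proof of the cited implication and is carried out correctly.
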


\begin{rem}
Note that the function $\Pi$ is $C^\infty$ and even real analytic
in the operator norm topology, so it does make sense to consider
$\frac{\partial\Pi(z)}{\partial z}$.
\end{rem}

\begin{rem}
Since $\left \bracevert \frac{\partial\Pi(z)}{\partial z}\right
\bracevert ^2_{\mathfrak{S}_2}-\frac{mn}{(1-|z|^2)^2} \geq 0$ (see
Section 3), $\vf$ is actually subharmonic.
\end{rem}

\begin{rem}
\label{rem0.3} For $m=1$, $-\left \bracevert
\frac{\partial\Pi(z)}{\partial z}\right \bracevert
^2_{\mathfrak{S}_2}$ and $-\frac{n}{(1-|z|^2)^2}$ represent the
curvatures of the eigenvector bundles of $T$ and of
$S^*_{n,\mathbb{C}}$, respectively (\cite{CowenDouglas},
\cite{GriffithsHarris}).
\end{rem}

\begin{rem}
\label{rem0.4} The existence of a bounded subharmonic function
$\vf$ defined on $\mathbb{D}$ satisfying
$$
\Delta \vf (z) \geq \left \bracevert
\frac{\partial\Pi(z)}{\partial z}\right \bracevert
^2_{\mathfrak{S}_2}-\frac{mn}{(1-|z|^2)^2}
$$
is equivalent to the uniform boundedness of the Green potential
$$
\mathcal G(\la) := \frac2\pi\iint_\D \log \left|
\frac{z-\la}{1-\overline \la z}\right| \left (\left \bracevert
\frac{\partial\Pi(z)}{\partial z} \right \bracevert
^2_{\mathfrak{S}_2}-\frac{mn}{(1-|z|^2)^2} \right)dxdy
$$
inside the unit disk $\D$.
\end{rem}

In order to prove Theorem 2.1, we first need to obtain a tensor
product structure for the operator $T$. Then since the equivalence
of statements (1) and (2) of Theorem 2.1 is obvious, and (3)
follows from the two statements

(4) The measure
$$
\left(\left \bracevert \frac{\partial\Pi(z)}{\partial z} \right
\bracevert
^2_{\mathfrak{S}_2}-\frac{mn}{(1-|z|^2)^2}\right)(1-|z|)dxdy
$$
is Carleson; and

(5) We have the estimate
$$
\left(\left\bracevert \frac{\partial\Pi(z)}{\partial z} \right
\bracevert
^2_{\mathfrak{S}_2}-\frac{mn}{(1-|z|^2)^2}\right)^{\frac{1}{2}}
\le \frac{C}{1-|z|},
$$
it suffices to show that (2) implies both (4) and (5) (Section 4)
and that (3) implies (1) (Section 5).

\section{Tensor structure of the eigenvector bundle}

\subsection{Structure of the eigenvector bundle of $T$}
 \label{s1.1}
The following theorem by J. Agler (\cite{Agler2}) proven through
the Rovnyak-de Branges construction is the first step to obtaining
a tensor product representation of the eigenvector bundle of $T$.
The reader is advised to consult \cite{Agler} also for an
alternative proof of the theorem based on complete positivity:

\begin{thm} Let $T \in \mathcal{L}(H)$. There exists a Hilbert space $E$ and an
$S^*_{n, E}$-invariant subspace $\mathcal{N} \subseteq
\mathcal{M}_{n, E}$ such that $T$ is unitarily equivalent to
$S^*_{n, E}|\mathcal{N}$ if and only if $T$ is an
$n$-hypercontraction with $\lim_k \|T^k h\|=0$ for all $h \in H$.
\end{thm}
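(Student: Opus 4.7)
The plan is to prove the two implications separately.

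For the \emph{only if} direction, I would verify both conditions first for the model operator $S^*_{n,E}$ itself and then transfer them to any $S^*_{n,E}$-invariant subspace. The $C_{\cdot 0}$ condition $\|(S^*_{n,E})^k f\|\to 0$ is immediate from the power-series representation of elements of $\mathcal{M}_{n,E}$. Hypercontractivity follows from a direct computation on the orthogonal monomial basis: for $k\le n$ the defect operators $\sum_{j=0}^k(-1)^j\binom{k}{j}S_{n,E}^j(S^*_{n,E})^j$ act diagonally with nonnegative eigenvalues thanks to the classical identity $(1-x)^{-n}(1-x)^n\equiv 1$. Both properties descend to a restriction $T=S^*_{n,E}|\mathcal{N}$: the $C_{\cdot 0}$ condition trivially, and for hypercontractivity one uses that the defect of $T$ is the compression $P_\mathcal{N}\,D\,P_\mathcal{N}|\mathcal{N}$ of the positive defect $D$ of the ambient $S^*_{n,E}$.

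For the \emph{if} direction, the strategy is to construct an explicit isometric embedding $V\colon H\to\mathcal{M}_{n,E}$ that intertwines $T$ with $S^*_{n,E}$; the range of $V$ will then be the required invariant subspace $\mathcal{N}$. Set
$$D_n^2:=\sum_{j=0}^n(-1)^j\binom{n}{j}T^{*j}T^j\ge 0$$
(well-defined by $n$-hypercontractivity) and let $E:=\overline{\ran}\,D_n$. Since $S^*_{n,E}$ reweights $\hat f(k+1)$ by $(k+1)/(n+k)$, a coefficient ansatz $\widehat{Vh}(k)=c_kD_nT^kh$ compatible with $VT=S^*_{n,E}V$ forces $c_{k+1}=\tfrac{n+k}{k+1}c_k$; normalizing $c_0=1$ yields
$$(Vh)(z):=\sum_{k=0}^\infty \binom{n+k-1}{k}z^k\,D_nT^kh.$$
With this formula the intertwining reduces to a one-line check on coefficients.

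The main obstacle is to prove that $V$ is an isometry. Expanding $\|Vh\|^2_{\mathcal{M}_{n,E}}$ with the reproducing-kernel weights $\binom{n+k-1}{k}^{-1}$, isometry becomes equivalent to the strong-operator identity
$$\sum_{k=0}^\infty \binom{n+k-1}{k}\,T^{*k}D_n^2T^k \;=\; I.$$
Formally this is the hereditary functional calculus identity $(1-T^*T)^{-n}(1-T^*T)^n\equiv 1$. Rigorously, after interchanging the two sums the coefficient of $T^{*m}T^m$ is $\sum_{k+j=m}\binom{n+k-1}{k}(-1)^j\binom{n}{j}$, which equals $\delta_{m,0}$ by the above generating-function identity. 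I would organize the partial sums as $I-R_N$, where $R_N\ge 0$ is a finite combination of $T^{*m}T^m$ with $m\ge N$ (positivity of the tail requiring the full strength of the hypercontractivity inequalities for $k=1,\dots,n$), and then use the hypothesis $\|T^kh\|\to 0$ to conclude $R_N\to 0$ strongly. Once $V$ is isometric, $\mathcal{N}:=\ran V$ is a closed $S^*_{n,E}$-invariant subspace since $S^*_{n,E}VH=VTH\subseteq VH$, and $V$ implements the desired unitary equivalence. I expect controlling the positivity and telescoping of the remainder $R_N$ at finite $N$ — rather than only formally at $N=\infty$ — to be the most delicate ingredient, and this is precisely where the Rovnyak--de Branges construction of \cite{Agler2} or the complete-positivity argument of \cite{Agler} supplies an abstract substitute for the combinatorial bookkeeping.
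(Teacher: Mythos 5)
First, a point of reference: the paper does not prove this statement at all. It is Agler's model theorem, quoted from \cite{Agler2} (where it is obtained via the Rovnyak--de Branges construction, with an alternative complete-positivity proof indicated in \cite{Agler}), so there is no internal argument to measure yours against; I can only judge the proposal on its own terms. On those terms it follows the standard route and is correct in outline. The ``only if'' direction is fine: the diagonal computation on monomials reduces positivity of the defects of $S^*_{n,E}$ to the nonnegativity of the Taylor coefficients of $(1-x)^{k}(1-x)^{-n}=(1-x)^{k-n}$ for $k\le n$, and both properties descend to $T=S^*_{n,E}|\mathcal{N}$ because $T^jh=(S^*_{n,E})^jh$ for $h\in\mathcal{N}$, so the quadratic form of each defect of $T$ coincides on $\mathcal{N}$ with that of the ambient operator. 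Your map $V$ is the correct one, the coefficient check of $VT=S^*_{n,E}V$ works (the reweighting factor is indeed $(k+1)/(n+k)$), and the identity $\sum_k\binom{n+k-1}{k}T^{*k}D_n^2T^k=I$ is the right target for isometry; once that is established, $\mathcal{N}:=\ran V$ does the job.

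The one step you assert more confidently than you justify is ``use the hypothesis $\|T^kh\|\to0$ to conclude $R_N\to0$ strongly.'' Writing $A_j:=\sum_{i=0}^j(-1)^i\binom{j}{i}T^{*i}T^i$ and Abel-summing with the recursion $A_j=A_{j-1}-T^*A_{j-1}T$ gives
$$
\sum_{k=0}^{N}\binom{n+k-1}{k}T^{*k}A_nT^k \;=\; I-\sum_{j=1}^{n}\binom{j+N-1}{N}\,T^{*(N+1)}A_{j-1}T^{N+1},
$$
so the positivity of the remainder $R_N$ does follow from $A_0,\dots,A_{n-1}\ge0$ exactly as you predict. But the weights $\binom{j+N-1}{N}\sim N^{\,j-1}$ grow, so the naive bound $\binom{j+N-1}{N}\|A_{j-1}\|\,\|T^{N+1}h\|^2$ does not tend to zero, and $\|T^Nh\|\to0$ alone is not enough. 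What closes the argument is a Tauberian induction: setting $b_N^{(j)}:=\langle A_jT^Nh,T^Nh\rangle\ge0$, one has $b_N^{(j)}-b_{N+1}^{(j)}=b_N^{(j+1)}\ge0$, so each $b_N^{(j)}$ with $j<n$ is nonincreasing; convergence of the level-$j$ partial sums (established inductively in $j$) gives summability of $b_N^{(j)}$ against the weight $N^{\,j-1}$, and monotonicity upgrades this to $N^{\,j}b_N^{(j)}\to0$, which is precisely what kills the corresponding tail term at the next level. This uses the full list $A_1,\dots,A_n\ge0$ twice --- once for positivity of the tail and once for the monotonicity --- and it is the genuinely nontrivial ingredient you defer to \cite{Agler2}. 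So I would classify your write-up as a correct strategy with one incompletely executed (but completable, along exactly the lines you set up) analytic step, rather than a wrong approach.
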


Let us first observe that $\lim_k \|T^k h\|=0$ for $h \in H$ that
is a linear combination of the eigenvectors of $T$. According to
assumption \cond2, these linear combinations form a dense subspace
of $H$. Moreover, since an $n$-hypercontraction is automatically a
contraction, we have $\|T^k\| \leq 1$. We can thus employ a
standard argument to show that $\lim_k \|T^k h\|=0$ for all $h \in
H$.

Hence, the eigenspaces of $T=S^*_{n, E}|\mathcal{N}$ are given by
$$
\ker(T-\la ) =\{ k_{\overline\la}^n e: e\in \mathcal{N}(\la)\},
$$
where $k_{\la}^n=(1-\bar{\la}z)^{-n}$, $\la \in \mathbb{D}$, is
the reproducing kernel for $\mathcal{M}_n$ and
$\mathcal{N}(\la):=\{e \in E; k_{\bar{\la}}^n e \in
\mathcal{N}\}$. Note that by assumption \cond 3, the subspaces
$\mathcal{N}(\la)$ also depend analytically on the spectral
parameter $\la$, i.e., the family of subspaces $\mathcal{N}(\la)$
is a holomorphic vector bundle over $\D$.

Now, since the vector-valued Hilbert space $\mathcal{M}_{n, E}$
can be identified with $\mathcal{M}_n \otimes E$, the tensor
product of the Hilbert spaces $\mathcal{M}_n$ and $E$, the
eigenvector bundle of $T$ takes on the form
$$
\ker(T-\la) = \spn\{k_{\overline\la}^n\} \otimes \mathcal{N}(\la).
$$

\subsection{Calculation involving the eigenvector bundle of $T$} Recall that $\Pi(\la)$ stands for the orthogonal projection onto $\ker (T-\la)$. Using the
tensor structure given above, we can express $\Pi(\la)$ as
\begin{equation}
\label{Pi-tensor} \Pi(\la)=\Pi_1(\la)\otimes\Pi_2(\la),
\end{equation}
where $\Pi_1(\la)$ is the orthogonal projection from the space
$\mathcal{M}_n$ onto $\spn\{k_{\bar\la}^n\}$, and $\Pi_2(\la)$ is
the orthogonal projection from $E$ onto $\mathcal{N}(\la)$. We
remark that $\rank \Pi(\la)=\rank \Pi_2(\la)=m$.

\begin{lm}
\label{l-PdP}For $\la \in \D$, let $\Gamma(\lambda)$ be orthogonal
projections onto an analytic family of subspaces (holomorphic
vector bundle). Then the identities
$$\Gamma(z) \frac{\p
\Gamma(z)}{\p z} =0$$ and
$$
(I-\Gamma(z))\frac{\p\Gamma(z)}{\p z} \Gamma(z) =
\frac{\p\Gamma(z)}{\p z}
$$
hold.
\end{lm}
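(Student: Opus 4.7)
The plan is to exploit two structural facts about $\Gamma$: that $\Gamma=\Gamma^*$ is self-adjoint, and that although $\Gamma(z)$ is not holomorphic in $z$, its range is a holomorphic sub-bundle. The second fact will force $\tfrac{\p\Gamma}{\p\bar z}$ to vanish on $\ran\Gamma$; transposing this via self-adjointness yields the first identity, and the second then drops out by differentiating $\Gamma^2=\Gamma$.

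First, I would fix $z_0\in\D$ and pick any $w\in\ran\Gamma(z_0)$. By the holomorphic bundle hypothesis, one can find a local holomorphic section $v(z)$ of the bundle with $v(z_0)=w$; pointwise this section satisfies $\Gamma(z)v(z)=v(z)$. Applying $\tfrac{\p}{\p\bar z}$ to both sides and using $\tfrac{\p v}{\p\bar z}=0$ collapses the identity to $\tfrac{\p\Gamma}{\p\bar z}(z)\,v(z)=0$. Evaluating at $z_0$ and letting $w$ range over the fiber shows $\tfrac{\p\Gamma}{\p\bar z}\,\Gamma\equiv 0$ on $\D$. Taking the adjoint and using $\Gamma^*=\Gamma$ together with $\bigl(\tfrac{\p\Gamma}{\p\bar z}\bigr)^*=\tfrac{\p\Gamma}{\p z}$ converts this into the desired $\Gamma\,\tfrac{\p\Gamma}{\p z}=0$.

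For the second identity, I would differentiate the idempotency relation $\Gamma^2=\Gamma$ in $z$, obtaining
$$
\tfrac{\p\Gamma}{\p z}\,\Gamma + \Gamma\,\tfrac{\p\Gamma}{\p z} = \tfrac{\p\Gamma}{\p z}.
$$
The first identity kills the second summand, leaving $\tfrac{\p\Gamma}{\p z}\,\Gamma = \tfrac{\p\Gamma}{\p z}$. Multiplying on the left by $I-\Gamma$ and invoking $\Gamma\,\tfrac{\p\Gamma}{\p z}=0$ once more yields $(I-\Gamma)\,\tfrac{\p\Gamma}{\p z}\,\Gamma = \tfrac{\p\Gamma}{\p z}$, as required. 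There is no genuine obstacle here; the only point deserving a careful word is the justification that $\tfrac{\p\Gamma}{\p\bar z}$ annihilates $\ran\Gamma$, which is essentially a restatement of the holomorphicity of the bundle and the fact that each fiber is locally spanned by values of holomorphic sections.
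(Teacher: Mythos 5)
Your proof is correct, but it takes a different route from the paper's. The paper works entirely in a local analytic frame: it writes $\Gamma=F(F^*F)^{-1}F^*$ for an analytic, left-invertible $F$ with $\ran F(\la)$ equal to the fiber, computes directly that $\frac{\p\Gamma}{\p z}=(I-\Gamma)F'(F^*F)^{-1}F^*$, and reads both identities off this formula (together with $\Gamma F=F$). You instead argue structurally: holomorphic sections through each fiber vector give $\frac{\p\Gamma}{\p\bar z}\,\Gamma=0$, self-adjointness of $\Gamma$ and the relation $\bigl(\frac{\p\Gamma}{\p\bar z}\bigr)^*=\frac{\p\Gamma}{\p z}$ convert this to the first identity, and differentiating $\Gamma^2=\Gamma$ gives the second. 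Your derivation of the second identity from idempotency is arguably cleaner and makes no reference to a frame at all. Two small points you gloss over, both harmless: you need $\Gamma$ to be differentiable in the first place (the paper gets this for free from the explicit formula, and Remark 2.4 records that such projection-valued functions are real analytic), and you need a holomorphic section through an arbitrary fiber vector $w$ (take $v(z)=F(z)c$ with $F(z_0)c=w$, which exists since $F(z_0)$ maps onto the fiber). One thing the paper's computation buys that yours does not: the explicit expression $\frac{\p\Gamma}{\p z}=(I-\Gamma)F'(F^*F)^{-1}F^*$ is reused verbatim in Section 4 to obtain the bound $\lnm\frac{\p\Pi_2}{\p z}\rnm\le C\nm F'\nm$, so the paper's route does double duty.
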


\begin{proof}[Proof of Lemma \ref{l-PdP}]
Since the family of subspaces is a holomorphic vector bundle, it
can be locally expressed as $\ran F(\la)$, where $F$ is an
analytic, left-invertible operator-valued function. Thus,
$\Gamma=F(F^*F)^{-1}F^*$. We obtain through direct computation
that
$$
\frac{\partial\Gamma(z)}{\partial
z}=(I-\Gamma(z))F'(z)(F(z)^*F(z))^{-1}F(z)^*.
$$
Since $\Gamma(z)$ is a projection, we immediately arrive at the
first identity. For the second one, we note that
$\Gamma(z)F(z)=F(z)$ implies $\frac{\partial\Gamma(z)}{\partial
z}\Gamma(z)=\frac{\partial\Gamma(z)}{\partial z}$. We then invoke
the first identity.
\end{proof}

\begin{lm}
\label{curv-shift} The projection $\Pi_1(\la)$ satisfies the
identity
$$
\left \bracevert\frac{\partial\Pi_1(z)}{\partial z} \right
\bracevert^2_{\mathfrak{S}_2} =n{(1-|z|^2)^{-2}}.
$$
\end{lm}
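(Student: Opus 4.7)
The plan is to use the explicit rank-one structure of $\Pi_1(\la)$ together with the first identity from Lemma~\ref{l-PdP} to get a concrete rank-one formula for $\partial \Pi_1 / \partial z$, compute its Hilbert--Schmidt norm, and then plug in the specific reproducing kernel of $\mathcal{M}_n$.

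First, I would write $\Pi_1(\la)$ as the rank-one projection onto $\spn\{e_\la\}$ with $e_\la := k_{\overline\la}^n = (1-\la z)^{-n}$, which is a holomorphic section of the eigenvector bundle of $S^*_{n}$. Taking the local analytic frame $F(\la)=e_\la$ in the proof of Lemma~\ref{l-PdP} yields
\[
\frac{\p\Pi_1(\la)}{\p \la} f \;=\; \frac{\La f, e_\la\Ra}{\|e_\la\|^2}\, (I-\Pi_1(\la))\, e_\la',
\qquad e_\la' := \frac{\p e_\la}{\p \la} = nz(1-\la z)^{-n-1}.
\]
This is manifestly a rank-one operator, so its Hilbert--Schmidt norm squared is just the product of the squared norms of the two ``factors'':
\[
\lnm \frac{\p\Pi_1(\la)}{\p \la}\rnm_{\mathfrak{S}_2}^{2}
\;=\; \frac{\|(I-\Pi_1(\la))e_\la'\|^2}{\|e_\la\|^2}
\;=\; \frac{\|e_\la'\|^2\|e_\la\|^2 - |\La e_\la',e_\la\Ra|^2}{\|e_\la\|^4}.
\]

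The next step is to recognize the right-hand side as $\frac{\p^2}{\p\la\,\p\overline\la}\log\|e_\la\|^2$, which is a one-line calculation using holomorphicity of $\la\mapsto e_\la$ (so $\p/\p\la$ acts only on the first slot of the inner product and $\p/\p\overline\la$ only on the second). This is the standard Fubini--Study identification, and it reduces the problem to evaluating a scalar Laplacian.

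Finally, since $\mathcal{M}_n$ has reproducing kernel $k_\mu^n(z)=(1-\overline\mu z)^{-n}$, we have $\|e_\la\|^2 = \La k_{\overline\la}^n, k_{\overline\la}^n\Ra = (1-|\la|^2)^{-n}$, so $\log\|e_\la\|^2 = -n\log(1-|\la|^2)$. A direct computation then gives
\[
\frac{\p^2}{\p\la\,\p\overline\la}\bigl[-n\log(1-|\la|^2)\bigr] \;=\; \frac{n}{(1-|\la|^2)^2},
\]
which is the claimed identity. There is no serious obstacle here; the only thing to be careful about is the bookkeeping of which variable is holomorphic (so that the single rank-one expression for $\p\Pi_1/\p z$ is correct and the identification with $\p\bar\p\log\|e_\la\|^2$ is clean).
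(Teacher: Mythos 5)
Your proof is correct, but it takes a genuinely different route from the paper's. The paper works entirely with explicit reproducing-kernel computations: it writes $\Pi_1(\la)f=(1-|\la|^2)^n f(\bar\la)k^n_{\bar\la}$, differentiates to get $\frac{\p\Pi_1}{\p\la}f=(1-|\la|^2)^{n-1}f(\bar\la)\bigl(-n\bar\la k^n_{\bar\la}+(1-|\la|^2)\wt k^n_{\bar\la}\bigr)$, and then evaluates $\|\wt k^n_\la\|^2=\frac{n(1+n|\la|^2)}{(1-|\la|^2)^{n+2}}$ and the cross term $\La\wt k^n_{\bar\la},k^n_{\bar\la}\Ra=\frac{n\bar\la}{(1-|\la|^2)^{n+1}}$ before assembling the answer and using $\rank\frac{\p\Pi_1}{\p\la}=1$ to pass from operator norm to Hilbert--Schmidt norm. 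You instead invoke the general formula from Lemma~\ref{l-PdP} with the frame $F(\la)=e_\la=k^n_{\bar\la}$, compute the Hilbert--Schmidt norm of the resulting rank-one operator abstractly, and identify it with the curvature $\frac{\p^2}{\p\la\,\p\bar\la}\log\|e_\la\|^2$; this reduces the whole lemma to the single scalar fact $\|k^n_{\bar\la}\|^2=(1-|\la|^2)^{-n}$ and one elementary Laplacian. Your computation checks out: the rank-one Hilbert--Schmidt identity $\lnm\frac{\p\Pi_1}{\p\la}\rnm^2_{\mathfrak{S}_2}=\bigl(\|e_\la'\|^2\|e_\la\|^2-|\La e_\la',e_\la\Ra|^2\bigr)/\|e_\la\|^4$ is exactly $\frac{\p^2}{\p\la\,\p\bar\la}\log\La e_\la,e_\la\Ra$ by holomorphicity of $\la\mapsto e_\la$, and $\frac{\p^2}{\p\la\,\p\bar\la}\bigl[-n\log(1-|\la|^2)\bigr]=n(1-|\la|^2)^{-2}$. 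What your approach buys is economy and conceptual transparency --- it makes explicit the curvature interpretation that the paper only mentions in Remark~\ref{rem0.3}, and it avoids computing $\|\wt k^n_\la\|^2$ and the cross term entirely; what the paper's approach buys is the explicit formula \eqref{PdP-1.1} for $\frac{\p\Pi_1}{\p\la}$ in terms of the kernels, which is self-contained and requires no appeal to the Fubini--Study identity.
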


\begin{proof}[Proof of Lemma \ref{curv-shift}]
We first use the reproducing kernel property of
$k_\la^n=1/{(1-\bar{\lambda} z)^n}$ to see that $\|k_\la^n\|_2^2 =
\La k_\la^n, k_\la^n\Ra = (1-|\la|^2)^{-n}$. Thus
$$
\Pi_1(\la) f =\|k_{\bar\la}^n\|_2^{-2} \La f, k_{\bar\la}^n \Ra
k_{\bar\la}^n = (1-|\la|^2)^n f(\bar\la) k_{\bar\la}^n,
$$
for $f \in M_n$. We next use the fact that $\frac{\p
f(\bar\la)}{\p\la}=0$ and $\frac{\p}{\p\la} k_{\bar\la}^n(z) =
\frac{nz}{(1-\la z)^{n+1}}=:\tilde{k}_{\bar{\lambda}}^n(z)$ to get
\begin{equation}
\label{PdP-1.1} \frac{\p\Pi_1(\la)}{\p\la} f =
(1-|\la|^2)^{n-1}f(\bar\la) \left( -n\bar\la k_{\bar\la}^n +
(1-|\la|^2) \wt k_{\bar\la}^n \right).
\end{equation}

Since $\La f, \widetilde k_{\la}^n\Ra = f'(\la)$ for $f \in M_n$,
$$
\|\wt k_{\la}^n \|_2^2 =
\frac{n(1+n|\lambda|^2)}{(1-|\lambda|^2)^{n+2}} = \|\wt
k_{\bar\la}^n \|_2^2.
$$
Once again, the reproducing property of $k_\la^n$ implies that
$$
\La\wt k_{\bar\la}^n , k_{\bar\la}^n\Ra =
\frac{n\bar\la}{(1-|\la|^2)^{n+1}}.
$$
Taking all these calculations into account, we conclude that
$$
\|-n\bar\la k_{\bar\la}^n + (1-|\la|^2) \wt k_{\bar\la}^n\|_2^2 =
n(1-|\la|^2)^{-n}.
$$
Thus,
$$
\left \bracevert\frac{\partial\Pi_1(\lambda)}{\partial \lambda}
\right \bracevert^2 = n(1-|\lambda|^2)^{-2},
$$
and we note from \eqref{PdP-1.1} that
$$
\rank \frac{\partial
\Pi_1(\la)}{\partial \la}=1.
$$
Therefore,
$$\left \bracevert\frac{\partial\Pi_1(\la)}{\partial \la} \right
\bracevert^2_{\mathfrak{S}_2} = \left
\bracevert\frac{\partial\Pi_1(\la)}{\partial \la} \right
\bracevert^2 = n{(1-|\la|^2)^{-2}}.$$

\end{proof}

\begin{lm}
\label{l-curv-T} The projection $\Pi(\la)$ satisfies the identity
\begin{align*}
{\left \bracevert \frac{\partial\Pi(z)}{\partial z}\right
\bracevert}^2_{\mathfrak{S}_2} & = m{\left \bracevert
\frac{\partial\Pi_1(z)}{\partial z}\right
\bracevert}^2_{\mathfrak{S}_2} + {\left \bracevert
\frac{\partial\Pi_2(z)}{\partial z}\right
\bracevert}^2_{\mathfrak{S}_2}
\\
& = \frac{mn}{(1-|z|^2)^2} \, + {\left \bracevert
\frac{\partial\Pi_2(z)}{\partial z}\right
\bracevert}^2_{\mathfrak{S}_2}.
\end{align*}
\end{lm}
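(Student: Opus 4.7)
The plan is to differentiate the tensor product representation $\Pi(z)=\Pi_1(z)\otimes\Pi_2(z)$ from \eqref{Pi-tensor} using the Leibniz rule, and then use the orthogonality relations from Lemma \ref{l-PdP} to make the cross terms in $(\partial\Pi/\partial z)^*(\partial\Pi/\partial z)$ vanish. The second equality in the statement is then immediate from Lemma \ref{curv-shift}, which gives $\bigl\bracevert \partial\Pi_1/\partial z \bigr\bracevert_{\mathfrak{S}_2}^2 = n(1-|z|^2)^{-2}$.

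For the first equality, write $A_i := \partial\Pi_i/\partial z$ for $i=1,2$. The Leibniz rule yields
$$
\frac{\partial\Pi}{\partial z} = A_1 \otimes \Pi_2 + \Pi_1 \otimes A_2.
$$
Using that $\Pi_i$ is self-adjoint together with the fact that the adjoint of a $\partial/\partial z$-derivative of an operator-valued real-analytic function is a $\partial/\partial\bar z$-derivative of the adjoint, one gets $(A_i)^* = \partial\Pi_i/\partial\bar z$. Taking the adjoint of the identity $\Pi_i A_i = 0$ from Lemma \ref{l-PdP} gives $A_i^* \Pi_i = 0$ for $i=1,2$. Expanding
$$
\left(\frac{\partial\Pi}{\partial z}\right)^*\frac{\partial\Pi}{\partial z} = \bigl(A_1^*\otimes\Pi_2 + \Pi_1\otimes A_2^*\bigr)\bigl(A_1\otimes\Pi_2 + \Pi_1\otimes A_2\bigr),
$$
the two cross terms contain factors $\Pi_1 A_1$, $A_2^*\Pi_2$, $A_1^*\Pi_1$, $\Pi_2 A_2$, all of which vanish, so only the diagonal terms survive:
$$
\left(\frac{\partial\Pi}{\partial z}\right)^*\frac{\partial\Pi}{\partial z} = A_1^*A_1\otimes\Pi_2 + \Pi_1\otimes A_2^*A_2.
$$

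Applying the trace and using multiplicativity of trace on tensor products, together with $\tr\Pi_1=1$ (since $\Pi_1$ is a rank-one projection) and $\tr\Pi_2=m$ (since $\dim\mathcal{N}(z)=m$), gives
$$
\left\bracevert\frac{\partial\Pi}{\partial z}\right\bracevert_{\mathfrak{S}_2}^2 = m\,\|A_1\|_{\mathfrak{S}_2}^2 + \|A_2\|_{\mathfrak{S}_2}^2,
$$
which is the first equality. Substituting Lemma \ref{curv-shift} then yields the second.

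No serious obstacle is expected: the only mildly delicate point is the adjoint bookkeeping that turns $\Pi_i A_i=0$ into $A_i^*\Pi_i=0$, and the routine verification that the tensor-product trace identity applies in the Hilbert--Schmidt setting here (both factors in each surviving term are of finite rank, so the trace manipulation is legitimate).
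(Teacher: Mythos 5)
Your proof is correct and follows essentially the same route as the paper: differentiate the tensor representation $\Pi=\Pi_1\otimes\Pi_2$ by the Leibniz rule, kill the cross terms using $\Pi_i\,\partial\Pi_i/\partial z=0$ from Lemma \ref{l-PdP}, and evaluate the surviving diagonal terms via multiplicativity of the trace on tensor products with $\tr\Pi_1=1$, $\tr\Pi_2=m$, finishing with Lemma \ref{curv-shift}. The only cosmetic difference is that you verify both cross terms vanish explicitly, whereas the paper writes the derivative as $X+Y$ and notes $X^*Y=0$ so that $2\re\tr(X^*Y)=0$.
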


\begin{proof}[Proof of Lemma \ref{l-curv-T}]

We apply the product rule to \eqref{Pi-tensor} to obtain
$$
\frac{\partial\Pi(\la)}{\partial \la} =\frac{\partial\Pi_1(\la
)}{\partial \la } \otimes\Pi_2(\la )+\Pi_1(\la
)\otimes\frac{\partial\Pi_2(\la )}{\partial \la } =: X + Y.
$$
Since $\Pi_2(\la) \frac{\p \Pi_2(\la)}{\p \la} = 0$ by Lemma
\ref{l-PdP}, $X^*Y=0$. Therefore,
$$
\left\bracevert X+Y\right\bracevert_{\mathfrak{S}_2}^2 = \tr X^*X
+ \tr Y^*Y + 2\re\tr (X^*Y) = \left\bracevert
X\right\bracevert_{\mathfrak{S}_2}^2 + \left\bracevert
Y\right\bracevert_{\mathfrak{S}_2}^2.
$$
Using the fact that $\left\bracevert A \otimes B
\right\bracevert^2_{\mathfrak{S}_2}=\left\bracevert A
\right\bracevert^2_{\mathfrak{S}_2}\left \bracevert B\right
\bracevert^2_{\mathfrak{S}_2}$ and that $\left\bracevert
P\right\bracevert_{\mathfrak{S}_2}^2 = \rank P$ for an orthogonal
projection $P$, we get
$$
\left\bracevert \frac{\partial\Pi(\la)}{\partial \la}
\right\bracevert_{\mathfrak{S}_2}^2 = m \left\bracevert
\frac{\partial\Pi_1(\la)}{\partial \la}
\right\bracevert_{\mathfrak{S}_2}^2 + \left\bracevert
\frac{\partial\Pi_2(\la)}{\partial \la}
\right\bracevert_{\mathfrak{S}_2}^2\,.
$$
The result now follows from Lemma \ref{curv-shift}.
 \end{proof}

\section{Proof of ``(2) implies (3)"}

Let us mention again that statements \cond 4 and \cond 5 of
Section 2 together imply statement \cond 3 of Theorem 2.1.
Moreover, since we have by Lemma \ref{l-curv-T}
$$
{\left \bracevert
\frac{\partial\Pi_2(\lambda)}{\partial\lambda}\right
\bracevert}^2_{\mathfrak{S}_2} = {\left \bracevert
\frac{\partial\Pi(\lambda)}{\partial\lambda}\right
\bracevert}^2_{\mathfrak{S}_2} - \frac{mn}{(1-|\la|^2)^2} ,\,
$$
the quantity ${\left \bracevert
\frac{\partial\Pi(\lambda)}{\partial\lambda}\right
\bracevert}^2_{\mathfrak{S}_2} - \frac{mn}{(1-|\la|^2)^2}$ in
statements \cond4 and \cond5 can be replaced by ${\left \bracevert
\frac{\partial\Pi_2(\lambda)}{\partial\lambda}\right
\bracevert}^2_{\mathfrak{S}_2}$.

Assume that statement \cond2 of Theorem 2.1 holds to guarantee the
existence of a holomorphic bundle map bijection $\Psi$ with a
certain property between the eigenvector bundles. Then for all $e
\in \mathbb{C}^m$,
$$
\Psi(k_{\bar{\lambda}}^ne)=k_{\bar{\lambda}}^n \cdot F(\lambda)e,
$$
where $F$ is some function in $H^{\infty}_{\mathbb{C}^m
\rightarrow E}$ satisfying $ \ran F(\lambda)= \mathcal{N}(\la)$
and $ \label{un_eq} c^{-1} I \le F^*F\le c I $. Thus it makes
sense to consider $(F^*F)^{-1}$ and we can express the orthogonal
projection $\Pi_2(\la)$ from $E$ onto $\mathcal{N}(\la)$ in terms
of $F$ as
$$
\Pi_2=F(F^*F)^{-1}F^*.
$$
Since $\frac{\partial\Pi_2(z)}{\partial
z}=(I-\Pi_2(z))F'(z)(F(z)^*F(z))^{-1}F(z)^*$, we get
\begin{equation}
\label{2.2} \left \bracevert \frac{\partial\Pi_2(z)}{\partial z}
\right \bracevert \leq C\nm F'(z) \nm.
\end{equation}

Lastly, we note that since $F$ is a bounded analytic function
taking values in a Hilbert space, the estimate
\begin{equation}
 \label{f'1} \nm F'(z) \nm \leq {C}/{(1-|z|)}
\end{equation}
holds, and the measure
\begin{equation}
\label{f'2}
 \nm F'(z) \nm ^2(1-|z|)dxdy
\end{equation} is Carleson. The first estimate \eqref{f'1} is well-known for
scalar-valued analytic functions, and one can pick $x^*=x^*(z)$,
$\nm x^*\nm=1$ in the dual space $X^*$ such that $\La F'(z),
x^*\Ra = \nm F'(z)\nm$ to show that it holds for functions with
values in a Banach space $X$. To see that the Carleson measure
condition \eqref{f'2} holds, we use \emph{Uchiyama's Lemma} which
states that for a bounded subharmonic function $u$, the measure
$\Delta u(z) (1-|z|) dxdy$ is Carleson. We apply this Lemma to the
function $u(z) = \nm F(z)\nm^2$ and note that $\Delta \nm
F(z)\nm^2 = \nm F'(z)\nm^2$. By \eqref{2.2}, \eqref{f'1}, and
\eqref{f'2}, we get the existence of a bounded subharmonic
function $\vf$ on $\D$ with
$$
\Delta \vf (z) \geq \left \bracevert
\frac{\partial\Pi_2(z)}{\partial z}\right \bracevert
^2_{\mathfrak{S}_2}.
$$

To obtain equality, we note that the equation $\Delta u(z)= f(z)$
always has a solution, namely, the Green potential
$$
\mathcal G_f(\la) := \frac2\pi\iint_\D \log \left|
\frac{z-\la}{1-\overline \la z}\right| f(z) dxdy.
$$
But since
$$
G_{\Delta \vf} \leq G_{\left \bracevert
\frac{\partial\Pi_2}{\partial z}\right \bracevert
^2_{\mathfrak{S}_2}} \leq 0,
$$
and $G_{\Delta \vf}$ is bounded, the subharmonic solution
$G_{\left \bracevert \frac{\partial\Pi_2}{\partial z}\right
\bracevert ^2_{\mathfrak{S}_2}}$ to
$$ \Delta u(z) = \left \bracevert \frac{\partial\Pi_2(z)}{\partial
z}\right \bracevert ^2_{\mathfrak{S}_2}
$$
is bounded as well.

\section{Proof of ``(3) implies (1)"}
The goal of this section is to prove the existence of a bounded,
invertible operator $A:\mathcal{M}_{n, \mathbb{C}^m} \to
\mathcal{N}$ such that $AS^*_{n, \mathbb{C}^m}=(S^*_{n,
E}|\mathcal{N})A$. We first consider the following theorem that
will let us get a bounded, analytic projection onto $\ran
\mathcal{N}(z)$ for $z \in \mathbb{D}$ \cite{TreilWick}.

\begin{thm}
\label{t_Tr-W} Let $\Gamma:\D \to \mathcal{L}(H)$ be a $\cC^2$
function whose values are orthogonal projections in $H$. Assume
that $\Gamma$ satisfies the identity $\Gamma(z)
\frac{\p\Gamma(z)}{\p z}=0$ for all $z \in \mathbb{D}$. Given a
bounded, subharmonic function $\vf$ with
$$
\Delta \vf (z) \ge \left \bracevert \frac{\p\Gamma(z)}{\p z}\right
\bracevert^2\qquad \text{for all }z\in \D,
$$
there exists a bounded analytic projection onto $\Gamma (z)$,
i.e., a function $\cP\in H^\infty_{H \shto H}$ such that $\cP(z)$
is a projection onto $\ran\Gamma(z)$ for all $z\in \D$.
\end{thm}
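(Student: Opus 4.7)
The plan is to build $\cP$ as a bounded perturbation of $\Gamma$. Write $\cP(z) = \Gamma(z) + S(z)$ and impose the off-diagonal block structure $S = \Gamma S (I-\Gamma)$; then $\Gamma S = S$ and $S\Gamma = 0$, which forces $S^2 = 0$ and (by direct expansion) $\cP^2=\cP$, $\ran\cP(z)=\ran\Gamma(z)$. So the problem reduces to making $\cP$ analytic, i.e. solving
$$
\bar\partial S(z) = -\bar\partial \Gamma(z),\qquad z\in\D,
$$
for a bounded $S$ of the prescribed block form. Taking the adjoint of the identity $\Gamma\,\partial_z\Gamma=0$ from Lemma \ref{l-PdP} gives $\bar\partial\Gamma\cdot\Gamma=0$, and differentiating $\Gamma^2=\Gamma$ then yields $\bar\partial\Gamma = \Gamma\cdot\bar\partial\Gamma\cdot(I-\Gamma)$. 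Thus the data already sits in the correct block, and since $\nm\bar\partial\Gamma(z)\nm=\nm\partial\Gamma(z)/\partial z\nm$, the hypothesis of the theorem becomes $\nm\bar\partial\Gamma(z)\nm^2 \le \Delta\varphi(z)$ with $\varphi$ bounded subharmonic.

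The next step is to convert this Laplacian control into the two standard pointwise conditions for a bounded $\bar\partial$-solution. Uchiyama's lemma (already used in Section 4) gives that $\nm\bar\partial\Gamma(z)\nm^2(1-|z|)\,dxdy$ is a Carleson measure on $\D$, and the mean-value argument that produced \eqref{f'1} yields $\nm\bar\partial\Gamma(z)\nm \le C/(1-|z|)$. These are exactly the hypotheses of Wolff's explicit $\bar\partial$-lemma, which constructs a bounded solution to $\bar\partial u=f$ as a Cauchy integral with a corrector built from the Green potential of a bounded subharmonic majorant of $\nm f\nm^2$. The operator-valued nature of the right-hand side causes no essential trouble: the Wolff kernel is scalar, so one transfers the scalar $L^\infty$-bound to the $\mathcal{L}(H)$-valued setting by testing against unit vectors, exactly as was done at \eqref{f'1}.

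The remaining issue is restoring the block structure: if $S_0$ is the bounded solution produced by Wolff, set $S := \Gamma S_0(I-\Gamma)$ and verify that the equation persists up to error terms involving products of $\bar\partial\Gamma$ with $S_0$, which are small enough in the same Carleson sense to be absorbed by a convergent geometric-series correction. The main obstacle in this program is precisely the $L^\infty$ estimate for the $\bar\partial$-solution from a single scalar subharmonic majorant of $\nm\bar\partial\Gamma\nm^2$: standard H\"ormander weighted $L^2$-estimates are insufficient, and one genuinely needs the Wolff/Jones machinery, now in its vector-valued refinement. Once this analytic ingredient is in hand, everything else is bookkeeping, and the resulting $\cP\in H^\infty_{H\shto H}$ is a bounded analytic projection onto $\ran\Gamma(z)$.
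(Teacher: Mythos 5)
First, a point of comparison: the paper does not prove this theorem at all --- it is quoted as a black box from \cite{TreilWick} --- so there is no in-paper argument to measure yours against; the relevant benchmark is the actual proof in that reference. Your algebra is correct: the ansatz $\cP=\Gamma+S$ with $S=\Gamma S(I-\Gamma)$ does give $\cP^2=\cP$ and $\ran\cP=\ran\Gamma$, the identities $\db\Gamma\cdot\Gamma=0$ and $\db\Gamma=\Gamma\,\db\Gamma\,(I-\Gamma)$ are right, and reducing to the constrained equation $\db S=-\db\Gamma$ is indeed the starting point of Treil--Wick. Two analytic assertions, however, are off. The hypotheses you list are not those of Wolff's lemma; the relevant result for a solution bounded throughout $\D$ is Jones's theorem, whose hypothesis is that $\nm \db\Gamma \nm(1-|z|)\,dxdy$ is Carleson (this does follow from the Uchiyama estimate by Cauchy--Schwarz against $(1-|z|)\,dxdy$, so this is repairable). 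Also, the pointwise bound $\nm\db\Gamma(z)\nm\le C/(1-|z|)$ does not follow from ``the argument that produced \eqref{f'1}'': that argument used analyticity of $F$, and $\Gamma$ is merely $\cC^2$. Finally, ``testing against unit vectors'' does not transfer the $L^\infty$ bound to operator-valued data: each pair of test vectors selects a \emph{different} scalar solution of $\db u=f$, and these cannot be reassembled into one bounded operator-valued solution. What does work is running Jones's explicit formula, whose kernel is scalar and built from the majorant $\nm\db\Gamma\nm$, and estimating the operator norm of the integral directly.

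The genuine gap is the step you dismiss as bookkeeping. Projecting a bounded solution $S_0$ of $\db S_0=-\db\Gamma$ to the block $S=\Gamma S_0(I-\Gamma)$ creates the error $E_1=(\db\Gamma)S_0(I-\Gamma)-\Gamma S_0(\db\Gamma)$, with $\nm E_1\nm\lesssim\|S_0\|_\infty\,\nm\db\Gamma\nm$. Solving $\db S_1=-E_1$ gives $\|S_1\|_\infty\lesssim K\|S_0\|_\infty$, where $K$ is the Carleson norm of $\nm\db\Gamma\nm(1-|z|)\,dxdy$, and iterating yields corrections of size $(CK)^j$. This geometric series converges only if $K$ --- equivalently, essentially $\|\vf\|_\infty^{1/2}$ --- is small, and nothing in the hypotheses makes it small; there is no gain from one step of the iteration to the next. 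Removing precisely this smallness assumption is the content and the main difficulty of \cite{TreilWick}; their proof does not proceed by this naive iteration. So your outline establishes only the ``small curvature'' case and stops exactly where the real work begins.
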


We know from Lemma \ref{l-PdP} that the function $\Pi_2$ whose
values are orthogonal projections from $E$ onto $\mathcal{N}(\la)$
satisfies the identity $\Pi_2(z) \frac{\p\Pi_2(z)}{\p z}=0$ so
that the above theorem is applicable. We thus get a bounded,
analytic projection $\cP(z)$ onto $\ran \Pi_2(z)=\mathcal{N}(z)$,
and consider the inner-outer factorization $\cP = \cP\ti i \cP\ti
o$ of $\cP$, where $\cP\ti i\in H^\infty_{E_*\shto E}$ for some
Hilbert space $E_*$, is an inner function and $\cP\ti o\in
H^\infty_{E \shto E_*}$ is an outer function. We then define a
function $\cQ_i$ on $\D$ by
$$
\cQ_i (z) := \cP\ti i(\bar z),
$$
and form the anti-analytic Toeplitz operator $T_{\cQ_i}$.

We claim that this bounded Toeplitz operator $T_{\cQ_i}$ is an
invertible operator that establishes similarity. To this end, we
need to prove the following three statements:
\begin{enumerate}
\item $T_{\bar{z}}T_{\cQ_i}=T_{\cQ_i}T_{\bar{z}}$;

\item $T_{\cQ_i}$ is left-invertible; and

\item $\ran T_{\cQ_i}=\mathcal{N}$.

\end{enumerate}

We begin by recalling some well-known facts about Toeplitz
operators on the vector-valued spaces $\mathcal{M}_{n}$. Let $F,G
\in H^\infty_{E \rightarrow E_*}$:
\begin{equation}
T_{FG}=T_FT_G; \text{ and}
\end{equation}
\begin{equation}
T_{F^*}k_{\la}^ne=k_{\la}^nF^*(\la)e \text{ for }e \in E_*.
\end{equation}

Since $\cQ_i^* \in H^{\infty}_{E \rightarrow E_*}$, statement (1)
easily follows from (5.1). To prove (2), we consider the following
Lemma.

\begin{lm}
\label{l3.3} $\cP\ti o(z) \cP\ti i(z) \equiv I$ for all $z\in \D$.
\end{lm}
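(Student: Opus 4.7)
The plan is to translate the projection identity $\cP^2 = \cP$ into an identity for the factors and then exploit the defining property of the outer part. Substituting $\cP = \cP\ti i \cP\ti o$ into $\cP^2 = \cP$ gives $\cP\ti i \cP\ti o \cP\ti i \cP\ti o = \cP\ti i \cP\ti o$ on $\D$, hence a.e.\ on $\T$. Since $\cP\ti i$ is inner, its boundary values are isometric, i.e., $\cP\ti i^* \cP\ti i = I_{E_*}$ a.e.\ on $\T$. Multiplying the previous identity on the left by $\cP\ti i^*$ cancels the leading factor and yields the boundary equation $\cP\ti o \cP\ti i \cP\ti o = \cP\ti o$ a.e.\ on $\T$. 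Both sides are boundary values of $H^\infty_{E \shto E_*}$-functions, so the equality propagates to the interior: $\cP\ti o(z) \cP\ti i(z) \cP\ti o(z) = \cP\ti o(z)$ for every $z \in \D$.

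Set $G := \cP\ti o \cP\ti i - I \in H^\infty_{E_* \shto E_*}$; the previous step gives $G \cdot \cP\ti o \equiv 0$ on $\D$. To upgrade this to $G \equiv 0$, I would invoke the defining property of the outer factor, namely that $\cP\ti o \cdot H^2_E$ is dense in $H^2_{E_*}$. Since multiplication by the bounded function $G$ is continuous on $L^2$, the identity $G \cdot (\cP\ti o h) = 0$ for $h \in H^2_E$ extends by density to $G \cdot f = 0$ for every $f \in H^2_{E_*}$. Applying this to constant vector-valued functions $f \equiv e$ for arbitrary $e \in E_*$, we get $G(\xi) e = 0$ for almost every $\xi \in \T$; by analyticity this forces $G(z) e = 0$ for every $z \in \D$, and since $e$ was arbitrary, $G \equiv 0$, i.e., $\cP\ti o \cP\ti i = I_{E_*}$ on $\D$ as claimed.

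The only non-routine step is the appeal to the denseness of $\cP\ti o H^2_E$ in $H^2_{E_*}$; this is precisely the content of the outer property in the operator-valued Beurling--Lax--Halmos factorization used to produce $\cP\ti i$ and $\cP\ti o$, so it is immediately available. Apart from this, the argument is elementary Hardy-space manipulation and I anticipate no real obstacle.
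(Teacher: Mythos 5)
Your proof is correct and follows essentially the same route as the paper: starting from $\cP^2=\cP$, you cancel the inner factor on the left (via its isometric boundary values, i.e.\ the injectivity of multiplication by $\cP\ti i$) and then cancel the outer factor on the right (via the density of $\cP\ti o H^2_E$ in $H^2_{E_*}$). The paper packages exactly these two cancellations as ``$\ker T_{\cP\ti i}=\{0\}$'' and ``$T_{\cP\ti o}$ has dense range'' after applying the multiplicativity $T_{FG}=T_FT_G$ to $T_\cP=T_{\cP^2}$; your version just carries out the same steps at the level of boundary values.
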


\begin{proof}[Proof of Lemma \ref{l3.3}]
By (5.1), we have that
$$
T_{\cP\ti i}  T_{\cP\ti o}  = T_\cP = T_{\cP^2} = T_{\cP\ti i
\cP\ti o \cP\ti i \cP\ti o} = T_{\cP\ti i} T_{\cP\ti o \cP\ti i}
T_{\cP\ti o} .
$$
Since $T_{\cP\ti o}$ has dense range and $\ker T_{\cP\ti i}
=\{0\}$, $T_{\cP\ti o \cP\ti i} = I$, so $\cP\ti o \cP\ti i \equiv
I$ for all $z\in \D$.
\end{proof}
We then note that since $\cQ^*_o \in H^\infty_{E_* \rightarrow
E}$, where $\cQ_o(z) :=\cP\ti o(\bar z)$, we can once again use
(5.1) to conclude that
$$
T_{\cQ_o}T_{\cQ_i}=T_{\cQ_ o \cQ_i}=I.
$$

It now remains to show statement (3). The inclusion
$\mathcal{N}(\la) = \ran \cP(\la) \subset \ran\cP\ti i(\la)$ is
obvious due to the factorization $\cP= \cP\ti i \cP\ti o$. For the
other inclusion, since $\ran \cP\ti o(\la)$ is dense in $E_*$ for
all $\la \in \D$ , and $\cP\ti i(\la) \ran\cP\ti o(\la) =
\mathcal{N}(\la)$, $\ran \cP\ti i(\la) \subset \mathcal{N}(\la)$.
Thus,
\begin{equation}
\ran\cP\ti i(\la) =\mathcal{N}(\la).
\end{equation}
 We next observe that by (5.2),

\begin{equation}
    T_{\cQ i} k_{\bar\la}^n e = k_{\bar\la}^n \cQ_i (\bar\la)e = k_{\bar\la}^n \cP\ti
    i(\la)e,
\end{equation}
for all $e \in E_*$. Then (3) follows from (5.4), the fact that
$\spn\{k_{\la}^n: \la \in \D\}=M_n$, and assumption \cond2 that
$\spn\{\ker(T-\lambda): \lambda \in \D\}=H$.
 \hfill \qed

\section{Proof of corollaries}
Now we prove the corollaries of Theorem 2.1 that appeared in
Section 2. The statements used in these proofs are contained in
\cite{Agler2}.
\begin{proof}[Proof of Corollary 2.2]
We have $\lim_k \|T^k h\|=0$ for $h \in H$ that is a linear
combination of the eigenvectors of $T$, which by assumption \cond2
is dense in $H$. If $T$ is a contraction, then $\|T^k\| \leq 1$,
so that $\lim_k \|T^k h\|=0$ for all $h \in H$. Now we use the
result that an operator $T \in \mathcal{L}(H)$ with
$$
\sum^n_{i=0} (-1)^i {{n}\choose{i}}T^{*i}T^i \geq 0,
$$
and such that $\lim_k \|T^k h\|=0$ for all $h \in H$ is an
$n$-hypercontraction.
\end{proof}

\begin{proof}[Proof of Corollary 2.3]
An operator $T$ is an $n$-hypercontraction for every $n$ if and
only if $\|T\| \leq 1$ and $T$ is subnormal \cite{Embry}.
\end{proof}

\def\cprime{$'$}
\providecommand{\bysame}{\leavevmode\hbox
to3em{\hrulefill}\thinspace}


\begin{thebibliography}{99}
\bibitem{Agler}J. Agler, \emph{The Arveson extension theorem and
coanalytic models, }{Integr. Equat. Op. Thy. } $\mathbf{5}$
(1982), 608-631.

\bibitem{Agler2}J. Agler, \emph{Hypercontractions and
subnormality, }{J. Operator Theory,} $\mathbf{13}$ (1985),
203-217.

\bibitem{Carleson}L. Carleson, \emph{Interpolations by bounded analytic functions and the corona problem,} {Ann. of Math.}
 $\mathbf{76}$ (1962), No.~3, 547-559.

\bibitem{CowenDouglas}M. J. Cowen and R. G. Douglas,
\emph{Complex geometry and operator theory,} {Acta. Math.
}$\mathbf{141}$ (1978), 187-261.

\bibitem{DKKS}
R. G. Douglas, Y. Kim, H. Kwon, and J. Sarkar, {\em Curvature
invariant and generalized canonical operator models - II},
preprint.

\bibitem{Embry}
M. Embry, \emph{A generalization of the Halmos-Bram criterion for
subnormality}, Acta Sci. Math. (Szeged), $\mathbf{35}$ (1973),
61-64.

\bibitem{GriffithsHarris}P. Griffiths and J. Harris,
\emph{Principles of Algebraic Geometry}, John Wiley $\&$ Sons,
Inc., New York, 1994.

\bibitem{KwonTreil}
H-K. Kwon and S. Treil, {\em Similarity of operators and geometry
of eigenvector bundles},  Publ. Mat. $\mathbf{53}$ (2009), No.~2,
417-438.

\bibitem{Nik-book-v1}
N.~K.~Nikolski, \emph{Operators, Functions, and Systems: An Easy
Reading.
  {V}ol. 1: Hardy, Hankel, and Toeplitz}, Mathematical Surveys and
Monographs,
  Vol.~92, American Mathematical Society, Providence, RI, 2002, Translated from
  the French by Andreas Hartmann.

\bibitem{Nik-shift}
\bysame, \emph{{Treatise on the Shift Operator}},
  Grundlehren der Mathematischen Wissenschaften [Fundamental Principles of
  Mathematical Sciences], vol. 273, Springer-Verlag, Berlin, 1986, Spectral
  function theory, With an appendix by S. V. Hru\v s\v cev [S. V.
  Khrushch{\"e}v] and V. V. Peller, Translated from the Russian by Jaak Peetre.

\bibitem{TreilWick} S. Treil and B. D. Wick, \emph{Analytic projections, corona problem and geometry of holomorphic vector
bundles,} {J. Amer. Math. Soc.}{ \bf 22} (2009), No.~1, 55--76.

 \end{thebibliography}
\end{document}